\newtheorem{theorem}{Theorem}
\newtheorem*{claim}{Claim}
\title{Hinged-rulers fold in $2-\Theta(\frac{1}{2^{n/4}})$}
\author{Colin Tang\footnote{Carnegie Mellon University, United States of America, \url{cstang@andrew.cmu.edu}}}
\begin{document}
\maketitle
\begin{abstract}
    A \textbf{hinged-ruler} is a sequence of line segments in the plane joined end-to-end with hinges, so each hinge joins exactly two segments, the first segment and last segment are adjacent to only one hinge each, and all other segments are adjacent to exactly two hinges. Hopcroft, Joseph, and Whitesides first posed the \textbf{hinged-ruler-folding problem} in \cite{hopcroft85}: given a hinged-ruler and a real number $K$, can the hinged-ruler be folded so as to fit within a one-dimensional interval of length $K$? We show that if the segment lengths are constrained to be real numbers in the interval $[0,1]$, then we can always fold the hinged-ruler so as to fit within a one-dimensional interval of length $2-\Omega(\frac{1}{2^{n/4}})$. On the other hand, we give a construction for a hinged-ruler which cannot be folded into any one-dimensional interval of length smaller than $2-O(\frac{1}{2^{n/4}})$.
\end{abstract}

\section{Introduction and definitions}

Let $n$ be a positive integer. Given elements $e_1,e_2,\cdots, e_n \in \{-1,+1\}$ and nonnegative real numbers $a_1, a_2, \cdots, a_n \in [0,1]$, define variables $s_0, s_1, s_2, \cdots, s_n$ as follows: \begin{align*}
    s_0 &= 0\\
    s_1 &= e_1a_1 \\
    s_2 &= e_1a_1 + e_2a_2 \\
    s_3 &= e_1a_1 + e_2a_2 + e_3a_3\\
    &\vdots \\
    s_n &= e_1a_1 + e_2a_2 + e_3a_3 + \cdots + e_na_n
\end{align*} In other words, $s_i = \sum_{k=1}^i e_ka_k$ for each $0 \le i \le n$. We may interpret the $a_i$ and $e_i$ as follows: A \textbf{hinged-ruler} is a series of line segments of length $a_1,a_2,\cdots, a_n$, joined end-to-end by hinges, so that the first segment (of length $a_1$) and the last segment (of length $a_n$) are adjacent to only one hinge each, but all other segments are adjacent to two hinges each. The hinges are constrained to take on only $0^\circ$ or $180^\circ$ bending angles; i.e. all the line segments must be parallel. Each line segment may point in the \emph{positive} or \emph{negative} directions, according to the value of $e_i$. 

Define the \textbf{range} $g(e_1,e_2,\cdots,e_n,a_1,a_2,\cdots,a_n)$ to be $(\max_k s_k) - (\min_k s_k)$; i.e. the range is the difference between the largest $s_i$ and the smallest $s_i$. In the hinged-ruler interpretation, the range is simply the length of the hinged-ruler once it has been folded in a particular way.

\begin{figure}
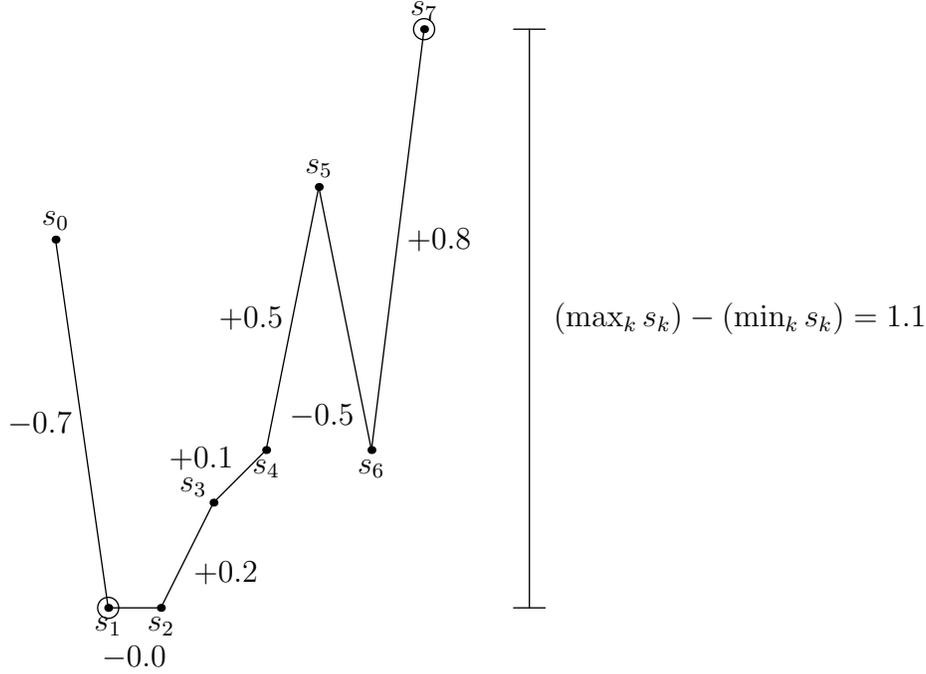

    \centering
\begin{asy}
unitsize(0.7cm);
pair p0 = (0,0); dot(p0); label("$s_0$",p0,N);
pair p1 = (1,-7); dot(p1); label("$s_1$",p1,S); draw(circle(p1,0.2));
pair p2 = (2,-7); dot(p2); label("$s_2$",p2,S);
pair p3 = (3,-5); dot(p3); label("$s_3$",p3,NW);
pair p4 = (4,-4); dot(p4); label("$s_4$",p4,S);
pair p5 = (5,1); dot(p5); label("$s_5$",p5,N);
pair p6 = (6,-4); dot(p6); label("$s_6$",p6,S);
pair p7 = (7,4); dot(p7); label("$s_7$",p7,N); draw(circle(p7,0.2));
draw(p0--p1--p2--p3--p4--p5--p6--p7);
label("$-0.7$",0.5*p0+0.5*p1,W);
label("$-0.0$",0.5*p1+0.5*p2,4*S);
label("$+0.2$",0.5*p2+0.5*p3,SE);
label("$+0.1$",0.5*p3+0.5*p4,NW);
label("$+0.5$",0.5*p4+0.5*p5,W);
label("$-0.5$",0.2*p5+0.8*p6,SW);
label("$+0.8$",0.5*p6+0.5*p7,E);
pair mi = (9,-7);
pair ma = (9,4);
pair rs = (0.3,0);
pair ls = (-0.3,0);
draw(mi--ma); label("$(\max_k s_k) - (\min_k s_k)=1.1$",0.5*mi+0.5*ma+(4,0));
draw(mi+ls--mi+rs);
draw(ma+ls--ma+rs);
\end{asy}
    \caption{The range $g(-,-,+,+,+,-,+,0.7,0.0,0.2,0.1,0.5,0.5,0.8)$ is equal to $1.1$. This is not the optimal choice of signs, as the step-cover $f(0.7,0.0,0.2,0.1,0.5,0.5,0.8)$ is equal to $0.9$.}
    \label{fig:range}
\end{figure}

Now define the \textbf{step-cover} $f(a_1,a_2,\cdots,a_n)$ to be \[f(a_1,a_2,\cdots,a_n)\coloneqq \min_{e_1,e_2,\cdots,e_n \in \{-1,+1\}} g(e_1,e_2,\cdots,e_n,a_1,a_2,\cdots,a_n);\] i.e. the step-cover of $(a_1,a_2,\cdots,a_n)$ is the smallest possible range, as we vary over all choices of signs $e_1,e_2,\cdots, e_n \in \{-1,+1\}$. In the hinged-ruler interpretation, the step-cover is the minimum possible length of the hinged-ruler, as we range over all possible foldings.

Define the \textbf{fit} $p_n$ to be \[p_n \coloneqq \max_{a_1,a_2,\cdots,a_n \in [0,1]} f(a_1,a_2,\cdots, a_n);\] i.e. the fit is the largest real number $M$ such that there exists a choice of $(a_1,a_2,\cdots,a_n)$ such that no matter how we choose the signs $(e_1,e_2,\cdots,e_n)$, the range is forced to be at least $M$. Since we may pad excess range with zeroes, we conclude that $p_n \le p_{n+1}$; i.e. the fit is monotonically nondecreasing in $n$.

Note that $f$ is a continuous piecewise-linear function of $(a_1,a_2,\cdots,a_n)$. More specifically, $f$ is linear on each domain $D_{e_1',e_2',\cdots,e_n',i,j}$, where the domain is defined to be the set of $(a_1,a_2,\cdots,a_n)$ such that $g(e_1,e_2,\cdots,e_n,a_1,a_2,\cdots,a_n)$ is minimized when $(e_1,e_2,\cdots,e_n) = (e_1',e_2',\cdots,e_n')$, and for that choice of $(e_1,e_2,\cdots,e_n)$, we have $\max_k s_k = s_i$ and $\min_k s_k = s_j$. Because each domain may be described as the intersection of half-planes whose defining equations have rational coefficients, and because $f$ itself takes the form of a rational linear combination of the $(a_1,a_2,\cdots,a_n)$ on each domain, we conclude that the fit is a rational number.

In 1985, Hopcroft, Joseph, and Whitesides posed the following question in \cite{hopcroft85}, in the context of robotic motion planning: given $(a_1,a_2,\cdots,a_n)$, what is their step-cover $f(a_1,a_2,\cdots,a_n)$? More specifically, define the decision problem \textsf{Ruler-Folding} as follows: given rationals $(a_1,a_2,\cdots,$ $a_n)$ and $K$, we say that $(a_1,a_2,\cdots,a_n,$ $K)$ $\in$ $\textsf{Ruler-Folding}$ if and only if $f(a_1,a_2,\cdots,a_n) \le K$. The authors prove that the decision problem \textsf{Ruler-Folding} is \textsf{NP}-complete using a reduction from the decision problem \textsf{Partition}, which is known to be \textsf{NP}-complete. The authors also show that the fit $p_n$ is always at most $2$, by using a greedy algorithm. Indeed, given prescribed step sizes $(a_1,a_2,\cdots,a_n)$, we may choose the $(e_1,e_2,\cdots,e_n)$ iteratively such that $|s_i| \le 1$ holds, which implies that all the $s_i$ are in the interval $[-1,1]$ of length $2$. The authors also give an explicit construction showing that $p_n \ge 2 - O(\frac{1}{n})$.

In 2005, C\u{a}linescu and Dumitrescu considered the two-dimensional ruler-folding problem in \cite{calinescu05}, where the $e_i$ are allowed to be arbitrary unit vectors in $\mathbb{R}^2$. In the hinged-ruler interpretation, the limitation that the hinges must follow $0^\circ$ or $180^\circ$ bending angles is removed, and each hinge may take on any bending angle from $0^\circ$ through $360^\circ$. The resulting hinged-ruler will not, in general, be contained in a one-dimensional interval, but we may define its two-dimensional range to be simply the smallest possible diameter of a convex set containing the hinged-ruler. The authors prove that the two-dimensional step-cover is always equal to $\max_k a_k$. In other words, any hinged-ruler may be folded so as to be contained in a convex set of diameter $1$ (and the convex set in particular may be taken to be a Reuleaux triangle). We will not consider the two-dimensional problem in this paper, choosing to focus on the one-dimensional problem.

In this paper we prove the following bounds on the fit, establishing that the fit $p_n$ is $2 - \Theta(\frac{1}{2^{n/4}})$:

\begin{theorem}\label{bound}
If $n = 4m$ is a multiple of $4$, we have $p_n \le 2 - \frac{1}{2^{m+3}-7}$.
\end{theorem}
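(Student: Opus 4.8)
The plan is to prove the equivalent statement $\frac{1}{2-p_{4m}}\le 2^{m+3}-7$ by induction on $m$, the inductive engine being a recursive estimate that loses only a factor of two (plus an additive constant) in the reciprocal gap $\frac{1}{2-p_n}$ each time four segments are appended. Writing $\delta_m = 2-p_{4m}$, the recursion will take the shape $\frac{1}{\delta_{m+1}}\le \frac{2}{\delta_m}+7$, whose solution with $\delta_1=\frac19$ is exactly $\frac{1}{\delta_m}=2^{m+3}-7$; the factor of two here is precisely what produces the $2^{n/4}$ in the statement, and the ``$4$'' in $n=4m$ is the number of fresh segments one must spend to buy that factor of two.

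First I would dispose of the base case $m=1$, i.e.\ $p_4\le\frac{17}{9}$. For a ruler with only four segments there are $2^4$ sign patterns, and modulo the global sign flip $(e_i)\mapsto(-e_i)$ and the ruler-reversal symmetry $(a_1,a_2,a_3,a_4)\mapsto(a_4,a_3,a_2,a_1)$ only a handful are genuinely distinct. On each linearity domain of $f$ (the regions $D_{\dots}$ described in the introduction) the step-cover is an explicit linear form in $(a_1,a_2,a_3,a_4)$, so maximizing it over $[0,1]^4$ amounts to a small finite collection of linear programs whose optimal values one checks are all at most $\frac{17}{9}$. This part is tedious but routine.

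The crux is the recursive step: \textbf{if every $4m$-segment ruler can be folded to range at most $2-\delta$, then every $(4m{+}4)$-segment ruler can be folded to range at most $2-\delta'$ with $\frac{1}{\delta'}\le\frac{2}{\delta}+7$.} To make this go through I expect to carry a strengthened inductive hypothesis that controls not just the range of an optimal folding but its \emph{shape} --- for instance, that one can always attain range $\le 2-\delta_m$ with the partial sums filling an interval of the form $[-1+\delta_m,1]$ and with the far endpoint $s_{4m}$ of the folded ruler lying at a prescribed, controllable location inside that interval. Given a $(4m{+}4)$-segment ruler I would split off four of its segments, fold the remaining $4m$ of them by the strengthened hypothesis into an interval $J$ with $|J|\le 2-\delta$, and then reattach the four segments, choosing their four signs (and the global sign of the folded remainder) in coordination with where $J$ sits and where its folded endpoint sits, so that the union of all partial sums is an interval exceeding $|J|$ by at most $\delta-\delta'$. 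Pushing the bookkeeping through with the best constant it allows is what yields $\frac{1}{\delta'}\le\frac{2}{\delta}+7$, and then the theorem follows by iterating from $\delta_1=\frac19$.

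The main obstacle is exactly this reattachment step, and it is delicate. The naive attempt --- fold the $4m$-block optimally and then greedily append the four new segments, keeping every partial sum in $[-1,1]$ --- is worthless: the greedy rule refills all of $[-1,1]$ and returns range $2$ with no saving; and one cannot hope instead to fold an arbitrary $4m$-block so that its partial sums steer clear of a full $\delta$-neighbourhood of an endpoint of $[-1,1]$, since greedy can be forced to overshoot any such margin. So the four new segments must be appended \emph{non-greedily}, allowing the partial sums to leave $[-1,1]$ in a tightly controlled manner, and the amount of slack one actually recovers is governed by precisely how the folded $4m$-block terminates --- which is why the inductive hypothesis has to record that terminal data in the first place. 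Verifying that the constants close in the form $\frac{1}{\delta'}\le\frac{2}{\delta}+7$, together with the base case $p_4\le\frac{17}{9}$, is where essentially all the work lies; assembling the outer induction is then immediate.
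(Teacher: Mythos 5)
There is a genuine gap: the entire mathematical content of your plan is the extension step (every $(4m{+}4)$-segment ruler folds to range $2-\delta'$ with $\frac{1}{\delta'}\le\frac{2}{\delta_m}+7$, given the statement for $4m$ segments), and you do not prove it --- you describe a strategy, note that the naive versions of it fail, and defer the "delicate" reattachment to unspecified bookkeeping. The strengthened inductive hypothesis you say you would carry (that every $4m$-segment ruler admits a folding of range at most $2-\delta_m$ whose partial sums fill an interval of a prescribed shape \emph{and} whose terminal point $s_{4m}$ can be steered to a prescribed location) is neither formulated precisely nor established, and it is far from clear it can be: instances like the paper's own lower-bound construction (Theorem \ref{construction}) are engineered so that foldings of small range are forced into essentially rigid sign patterns over long stretches, leaving no evident freedom to dictate where the folded ruler terminates. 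The constant $7$ in your recursion is reverse-engineered from the desired answer rather than extracted from any argument, and the base case $p_4\le\frac{17}{9}$ is likewise asserted (as a finite case/LP check) rather than carried out. So what you have is a plausible outline whose crux --- by your own account --- is missing.

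For contrast, the paper does not induct on $n$ at all, and the mechanism it uses suggests why a one-end, block-by-block recursion is hard to close. It first merges any adjacent pair with $a_i+a_{i+1}\le 1$ (padding with a segment of length $1$), so one may assume consecutive lengths sum to more than $1$. It then anchors a random walk at the middle index $2m$: the initial position is uniform on $[-1,1]$, and the walk steps outward in both directions by $a_{2m+1},\dots,a_{4m}$ and by $a_{2m},\dots,a_1$, moving deterministically inward whenever it is within one step-length of an endpoint. The key claim is that two consecutive steps $b,b'$ with $b+b'>1$ at most double the supremum of the density; hence after $2m$ steps to either side the density is at most $2^m$ times the uniform one, and summing the probabilities of ever landing within $\frac{\epsilon}{2}$ of an endpoint gives $(2^{m+3}-7)\epsilon\le 1$, so some choice of signs keeps every partial sum inside an interval of length $2-\epsilon$. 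The factor $2^{n/4}$ thus arises from a doubling per \emph{two} consecutive steps applied over only half the ruler (growing outward from the middle), together with the merging reduction; your scheme has no analogue of either device, and without them the claimed loss of only a factor $2$ per appended block of four segments remains unsupported.
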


\begin{theorem}\label{construction}
If $n = 4m-1$ is $1$ less than a multiple of $4$, we have $p_n \ge 2 - \frac{1}{3\cdot 2^{m-1}-1}$.
\end{theorem}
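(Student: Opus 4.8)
The plan is to exhibit, for each $m\ge1$, an explicit sequence $C_m=(a_1,\dots,a_{4m-1})$ with every $a_i\in[0,1]$ and step-cover $f(C_m)\ge L_m:=2-\tfrac1{3\cdot2^{m-1}-1}$; then $p_{4m-1}\ge f(C_m)\ge L_m$ directly from the definition of the fit. I would build the $C_m$ recursively: take $C_1=(1,\tfrac12,1)$, and obtain $C_{m+1}$ from $C_m$ by inserting a block of four more segments — some of length $1$, the rest suitable rationals — chosen so that the "gap to $2$" obeys $2-L_{m+1}=\frac{2-L_m}{4-L_m}$, equivalently $d_{m+1}=2d_m+1$ for $d_m:=3\cdot2^{m-1}-1$, which solves the recursion from $d_1=2$. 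The base case already exhibits the mechanism: the first unit segment forces $\{s_0,s_1\}$ to span $1$; the half-unit segment forces $s_2\in\{s_1-\tfrac12,\,s_1+\tfrac12\}$, so either the span already reaches $\tfrac32$ or $s_2$ lands at the midpoint of the segment $s_0s_1$; and in that latter case the final unit segment forces $s_3$ to lie at distance $\tfrac32$ from $s_0$ or from $s_1$. The block at level $m{+}1$ is meant to replay this "overshoot" one scale finer.

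Before the induction I would record the reformulation that makes the adversary concrete: $f(a_1,\dots,a_n)\ge L$ holds iff for every sign vector $e\in\{-1,+1\}^n$ the partial sums $s_0,\dots,s_n$ span an interval of length $\ge L$, i.e.\ $|s_r-s_l|=\bigl|\sum_{k=l+1}^{r}e_ka_k\bigr|\ge L$ for some $0\le l\le r\le n$. Thus, with the step lengths fixed, the adversary chooses signs so as to confine the $\pm$-walk $s_0,\dots,s_n$ to a short interval. I would track, for a folding of a prefix $a_1,\dots,a_k$, the pair $(u,v)$ with $u=(\max_i s_i)-s_k\ge0$ and $v=s_k-(\min_i s_i)\ge0$, so the current span is $u+v$; appending $\pm a_{k+1}$ sends $(u,v)\mapsto(\max(u-a,0),\,v+a)$ for sign $+$ and $(u,v)\mapsto(u+a,\,\max(v-a,0))$ for sign $-$, and for whichever sign is cheaper the span grows by exactly $\max\bigl(a-\max(u,v),\,0\bigr)$. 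So a segment of length $a$ is \emph{absorbable} (span unchanged) only from a state with $\max(u,v)\ge a$; in particular a unit segment is absorbable only when $\min(u,v)\le(\text{span})-1$.

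The core is an induction on $m$ carrying an invariant strictly stronger than "$f(C_m)\ge L_m$": every folding of $C_m$ whose span stays below $2$ ends in a state $(u,v)$ lying in a controlled set $\mathcal S_m$ (a small union of intervals), every element of which has $u+v\ge L_m$ \emph{and} is sufficiently imbalanced — $\min(u,v)$ small — that the unit segments of the next block can be absorbed only at the price of the full jump to span $L_{m+1}$. The inductive step is a finite case analysis over the $2^4$ sign patterns of the inserted block: for each pattern I would argue that either the span already reaches $L_{m+1}$ within the block (precisely because, from any state of $\mathcal S_m$, a unit segment fails to be absorbable at the decisive step), or the adversary gets past the block's unit segments only by ending in a state that violates the imbalance clause of $\mathcal S_{m+1}$ — which, pushed through one more block, again yields $L_{m+1}$. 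The four block lengths, together with $\mathcal S_{m+1}$ and $L_{m+1}$, are determined by forcing the worst case of this dichotomy to be tight; checking that the block lengths lie in $[0,1]$ and that the induced recursion is $d_{m+1}=2d_m+1$ is then bookkeeping.

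The hard part is ensuring the construction defeats \emph{every} adversary strategy, not just the greedy "keep the walk balanced" one. In particular, the tempting two-segments-per-doubling attempt $(1,\tfrac12,1,\tfrac34,1,\tfrac78,1,\dots)$ fails: after choosing the first two signs equal, the adversary sits at the top of a span-$\tfrac32$ window, and the interleaved segments $\tfrac34,\tfrac78,\dots$ obligingly reposition it to an extreme of that window just before each unit segment, so it takes every remaining segment without ever leaving the window and finishes with span $\tfrac32$ rather than near $2$ (for instance $e=(+,+,-,+,-,+,-)$ holds the span at $\tfrac32$). Defeating this \emph{front-loading} escape — accept a large but still-sub-$L_m$ span early, then steer the rest, unit segments included, inside it — is exactly why four segments per doubling are needed, and why the invariant must pin down the imbalance and not merely the span: the block has to be arranged so that front-loading to a span below $L_{m+1}$ leaves too little slack to then absorb the block's own unit segments. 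Making this $2^4$-case dichotomy airtight, with block lengths chosen so that all the inequalities close simultaneously, is the crux; the remainder is arithmetic.
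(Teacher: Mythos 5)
Your overall strategy is the right one (exhibit an explicit family and lower-bound its step-cover), your base case $C_1=(1,\tfrac12,1)$ agrees with the paper's $m=1$ instance, and your target recursion $d_{m+1}=2d_m+1$ is exactly consistent with the claimed bound $2-\frac{1}{3\cdot 2^{m-1}-1}$. But as written this is a plan, not a proof, and the missing part is precisely the content of the theorem: you never specify the four inserted lengths, where in the sequence the block goes, or what the invariant sets $\mathcal S_m$ are, and the $2^4$-pattern verification that is supposed to close the induction is explicitly deferred (``making this dichotomy airtight \ldots is the crux''). Nothing in the proposal shows that block lengths and invariants satisfying all the required inequalities \emph{exist}; calling their determination ``bookkeeping'' assumes the answer. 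There is also a structural reason to doubt the nested scheme as stated: in the construction that actually works, \emph{every} non-unit length is rescaled when $m$ increases (for $m=1$ the sequence is $(1,\tfrac12,1)$, for $m=2$ it is $(1,\tfrac45,1,\tfrac35,1,\tfrac45,1)$, for $m=4$ it is $(1,\tfrac{22}{23},1,\tfrac{21}{23},\dots)$), so $C_{m+1}$ does not contain $C_m$ as a contiguous or even scattered sublist of equal lengths. A family in which $C_{m+1}$ literally keeps the lengths of $C_m$ and adds four new segments is a genuinely different object, and you have not produced even the first inductive step $C_1\to C_2$ of it; note also that if the new block is appended at an end, the adversary can fold the old part to its optimal span ending at an extreme point and then absorb quite a lot, so the placement question is not innocuous.

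For comparison, the paper avoids any prefix-state induction: it writes the whole sequence down in closed form (length $1$ at every odd position; at even position $i$ the length is $1-2^{j}\delta$ with the exponent $j$ increasing toward the middle, peaking at $1-2^{m-1}\delta$ at $i=2m$), and then argues globally about sign patterns. If any even-indexed sign agrees with its neighbor on the side away from the middle, one walks from that segment toward the nearer end and the partial sums already span at least $2-\delta$; more generally any non-alternating pattern overshoots, so only the perfectly alternating folding remains, and its range is computed exactly as $1+(1+2+\cdots+2^{m-1}+\cdots+2+1)\delta=2-\delta$. Your imbalance-tracking formalism (the $(u,v)$ state and the absorbability criterion) is a reasonable language in which such a verification could perhaps be carried out, and your diagnosis of why two segments per doubling fail (the front-loading escape) is correct and insightful; but until you produce the general-$m$ lengths and actually run the case analysis, the theorem is not proved.
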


\section{Proofs}

\textit{Proof of Theorem \ref{bound}.} Assume that $n=4m$ is a multiple of $4$, and define $\epsilon = \frac{1}{2^{m+3}-7}$. 

Let $a_1,a_2,\cdots,a_n \in [0,1]$ be arbitrary; we aim to show that we may choose $(e_1,e_2,\cdots,e_n)$ such that the resulting range is at most $2 - \epsilon$. First note that if there exists $i$ such that $a_i + a_{i+1} \le 1$, we may replace $(a_1,a_2,\cdots, a_n)$ with \[(a_1,a_2,\cdots,a_{i-1},a_i+a_{i+1},a_{i+2}, \cdots, a_n, 1).\] Thus, we may assume that there does not exist $i$ such that $a_i + a_{i+1} \le 1$. In other words, for each $i$, we have $a_i + a_{i+1} > 1$.

If $q: [-1,1] \rightarrow [0,+\infty)$ is a probability distribution (for our purposes, this will be a piecewise-constant function with integral $1$), define $\Gamma_-(q,a)$ to be the following measurable function:\begin{itemize}
    \item If $-1 \le x \le 1-2a$, then $(\Gamma_-(q,a))(x) = \frac12 q(x+a)$.
    \item If $1-2a \le x \le 1-a$, then $(\Gamma_-(q,a))(x) = q(x+a)$.
    \item If $1-a \le x \le 1$, then $(\Gamma_-(q,a))(x) = 0$.
\end{itemize} Similarly, we define $\Gamma_+(q,a)$ to be the following measurable function:\begin{itemize}
    \item If $-1+2a \le x \le 1$, then $(\Gamma_+(q,a))(x) = \frac12 q(x-a)$.
    \item If $-1+a \le x \le -1+2a$, then $(\Gamma_+(q,a))(x) = q(x-a)$.
    \item If $-1 \le x \le -1+a$, then $(\Gamma_+(q,a))(x) = 0$.
\end{itemize} Note that $\Gamma_-(q,a)$ and $\Gamma_+(q,a)$ are well-defined almost everywhere. Finally, we define the probability distribution $\Phi(q,a)$ to be simply $\Phi(q,a) \coloneqq \Gamma_-(q,a) + \Gamma_+(q,a)$. It is easily verified that if $\int q \, dx = 1$, then for any $a \in [0,1]$, we have $\int \Phi(q,a) \, dx = 1$, so $\Phi(q,a)$ is indeed a probability distribution.

Intuitively, we may describe $\Phi(q,a)$ as follows. A particle $P$ is initially placed at a location $x$ in $[-1,1]$, where the probability distribution of $x$ is given by $q$. Then: \begin{itemize}
    \item If $-1 \le x \le -1+a$, $P$ moves to the right by a distance of exactly $a$ with probability $1$.
    \item If $-1+a \le x \le 1-a$, $P$ moves left or right by a distance of exactly $a$, with the two directions chosen with probability $\frac12$ each.
    \item If $1-a \le x \le 1$, $P$ moves to the left by a distance of exactly $a$ with probability $1$.
\end{itemize} Then $\Phi(q,a)$ is the probability distribution of the final position $x'$ of $P$.

\begin{figure}
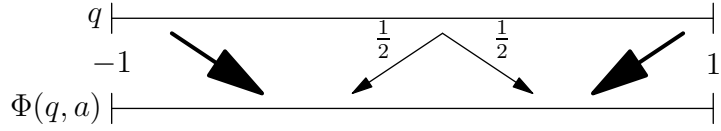

    \centering
\begin{asy}
unitsize(2cm);
pair ut = (0,0.1);
pair dt = (0,-0.1);
pair us = (0,0.3);
pair ds = (0,-0.3);
pair ls = (-2,0);
pair rs = (2,0);
draw(ls+us--rs+us);
draw(ls+us+ut--ls+us+dt);
draw(rs+us+ut--rs+us+dt);
draw(ls+ds--rs+ds);
draw(ls+ds+ut--ls+ds+dt);
draw(rs+ds+ut--rs+ds+dt);
label("$-1$",ls);
label("$1$",rs);
label("$q$",ls+us,W);
label("$\Phi(q,a)$",ls+ds,W);
pen th = linewidth(1.5);
draw((-1.6,0.2)--(-1.0,-0.2), p = th, arrow = Arrow()); 
draw((0.2,0.2)--(-0.4,-0.2), arrow = Arrow()); label("$\frac12$",(-0.1,0),0.8*NW);
draw((0.2,0.2)--(0.8,-0.2), arrow = Arrow()); label("$\frac12$",(0.5,0),0.8*NE);
draw((1.8,0.2)--(1.2,-0.2), p = th, arrow = Arrow());
\end{asy}
    \caption{How to calculate $\Phi(q,a)$ given $q$. In the diagram, $a = 0.3$.}
    \label{fig:Phi}
\end{figure}

For each $0 \le i \le 4m$, define a probability distribution $q_i: [-1,1] \rightarrow [0,+\infty)$ satisfying the following criteria: \begin{itemize}
    \item[(i)] $q_{2m}$ is the uniform distribution on $[-1,1]$, so $q_{2m}(x) = \frac12$ for all $-1 \le x \le 1$.
    \item[(ii)] If $2m \le i \le 4m-1$, $q_{i+1}$ is given by $\Phi(q_i,a_{i+1})$.
    \item[(iii)] If $1 \le i \le 2m$, $q_{i-1}$ is given by $\Phi(q_i,a_i)$.
\end{itemize}

\begin{figure}
    \centering
$q_0 \xleftarrow{a_1} \cdots \xleftarrow{a_{2m-2}} q_{2m-2} \xleftarrow{a_{2m-1}} q_{2m-1} \xleftarrow{a_{2m}} q_{2m} \xrightarrow{a_{2m+1}} q_{2m+1} \xrightarrow{a_{2m+2}} q_{2m+2} \xrightarrow{a_{2m+3}} \cdots \xrightarrow{a_{4m}} q_{4m}$
    \caption{The $q_i$ are computed starting with $q_{2m}$ and moving outwards to both ends, with step sizes given by the $a_i$.}
    \label{fig:q}
\end{figure}

Note that for $i \ge 2m+1$, each $q_i$ is the distribution of the endpoint of a random walk where the particle $P$ is initially distributed uniformly on $[-1,1]$ and then takes steps of size $a_{2m+1},a_{2m+2}$, $\cdots$, $a_i$.

For each $i$, let $r_i$ denote the portion of $q_i$ that is in the fringes; i.e. \[r_i = \int_{-1}^{-1+(\epsilon/2)} q_i \, dx + \int_{1-(\epsilon/2)}^1 q_i \, dx.\] Note that $r_i$ is the probability that the $i$th step of the random walk is outside $[-1+\frac{\epsilon}{2},1+\frac{\epsilon}{2}]$. If we can show that $\sum_i r_i < 1$, then with positive probability, the random walk never leaves $[-1+\frac{\epsilon}{2},1+\frac{\epsilon}{2}]$, resulting in a range of at most $2-\epsilon$, as desired. In fact, it suffices to show that $\sum_i r_i \le 1$, due to compactness and the fact that we may expand the fringes slightly.

We have the following key claim.
\begin{claim}
Let $q:[-1,1] \rightarrow [0,+\infty)$ be a probability distribution and let $b, b' \in [0,1]$ be such that $b + b' > 1$. Define $q' = \Phi(q,b)$ and $q'' = \Phi(q',b')$ (the primes do not mean derivatives). Then \[\max_{-1 \le x \le 1}q''(x) \le 2\max_{-1 \le x \le 1} q(x).\] In other words, $q_{4m}$ is at most $2^m$ as ``concentrated'' as $q_{2m}$.
\end{claim}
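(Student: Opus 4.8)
The plan is to track how the operator $\Phi(\cdot,a)$ affects the sup-norm of a density, and to observe that a single application of $\Phi$ can at worst double the maximum (because the $\frac12 q(x\pm a)$ contributions from two source points can overlap), but that a second application with a step $b'$ satisfying $b+b'>1$ cannot compound this: the ``doubling'' regions created by $\Phi(\cdot,b)$ are confined to intervals near the two endpoints, and the constraint $b+b'>1$ prevents those two regions from both feeding into the same output point under $\Phi(\cdot,b')$. So the first step is to get a precise, localized bound on $q' = \Phi(q,b)$: writing $M = \max q$, I would show that $q'(x) \le M$ for $x$ in the ``central'' range $[-1+b,\,1-b]$ (where only the $\frac12$-weighted halves land) — wait, that is backwards; let me instead show $q'(x) \le 2M$ everywhere, with the sharper bound $q'(x) \le M$ holding outside two short intervals. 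Precisely, $\Gamma_-(q,b)$ is supported on $[-1,1-b]$ and equals $q(x+b)$ only on $[1-2b,1-b]$, being $\frac12 q(x+b)$ on $[-1,1-2b]$; symmetrically for $\Gamma_+$. The point where both full-weight pieces could overlap is where $[1-2b,1-b]$ meets $[-1+b,-1+2b]$, which requires $1-2b \le -1+2b$, i.e. $b \ge 1/2$; elsewhere at least one of the two summands is halved.

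The second step is the heart of the argument: bounding $q'' = \Phi(q',b')$ at an arbitrary point $y \in [-1,1]$ by $2M$. By definition $q''(y)$ is a sum of at most two terms, each of the form (coefficient $\in \{\frac12, 1\}$) times $q'$ evaluated at $y - b'$ or $y + b'$. If both coefficients are $\frac12$, then $q''(y) \le \frac12 q'(y-b') + \frac12 q'(y+b') \le \frac12(2M) + \frac12(2M) = 2M$ and we are done. The only dangerous case is when one of the coefficients is $1$, say the $\Gamma_-$ term: that forces $y + b' \in [1-2b',1-b']$, i.e. $y \ge 1 - 2b'$. I would then argue that at the relevant source point $x^* := y + b'$ (respectively the $\Gamma_+$ source point), we actually have $q'(x^*) \le M$ rather than just $\le 2M$, because $x^*$ lies outside the ``overlap zone'' of $\Phi(\cdot,b)$ — and this is exactly where $b + b' > 1$ is used, since $y \ge 1-2b'$ combined with $b+b'>1$ pushes $x^* = y+b'$ to a location where $\Gamma_+(q,b)$ has already vanished (its support is $[-1+b,1]$, but more importantly the full-weight part of $\Gamma_-(q,b)$ sits at $x \le 1-b < b'$... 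I need to chase the inequalities carefully). The upshot I am aiming for: whenever a full-weight ($1$) coefficient appears in the $\Phi(\cdot,b')$ step, the corresponding value of $q'$ is bounded by $M$, not $2M$, so $q''(y) \le 1\cdot M + \frac12\cdot 2M = 2M$, and symmetrically if the full-weight term is the $\Gamma_+$ one.

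Concretely the case analysis would split on which of the four combinations of coefficient-types $(1 \text{ or } \tfrac12)$ for the two summands of $q''(y)$ occurs, use the support and halving structure of $q'$ from step one, and in each case invoke $b + b' > 1$ to rule out the configuration where a full-weight $\Phi(\cdot,b')$ term reads from the overlap zone of $\Phi(\cdot,b)$. I would also handle the degenerate edges ($b=1$ or $b'=1$, or $b,b'$ near $1$) where some of the named intervals are empty — these only make the bounds easier. The final line of the claim, that $\max q_{4m} \le 2^m \max q_{2m} = 2^m \cdot \tfrac12 = 2^{m-1}$, follows by applying the just-proved two-step bound $m$ times to the chain $q_{2m} \to q_{2m+2} \to \cdots \to q_{4m}$ (using consecutive pairs $(a_{2m+1},a_{2m+2}), (a_{2m+3},a_{2m+4}),\dots$, each pair summing to $>1$ by the reduction at the start of the proof), and an identical argument on the left half $q_{2m} \to q_{2m-2} \to \cdots \to q_0$.

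The main obstacle I anticipate is the bookkeeping in step two: the operator $\Phi$ is a sum of two shifted, piecewise-rescaled copies, so $q''(y)$ is genuinely a sum of up to two evaluations of $q'$, each of which is itself a sum of up to two rescaled evaluations of $q$ — up to four terms total — and I must show their total coefficient-weighted contribution never exceeds $2M$. Keeping the interval endpoints straight (which of $-1, -1+b, -1+2b, 1-2b, 1-b, 1$ and their $b'$-shifted analogues are relevant, and which are forced to coincide or cross when $b+b'>1$) is the delicate part; the probabilistic ``particle'' picture from Figure \ref{fig:Phi} is the right intuition to keep the casework honest, since it makes clear that mass at an endpoint region is deterministically pushed inward and cannot be re-concentrated by the next step.
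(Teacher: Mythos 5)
Your overall strategy (each application of $\Phi$ can at worst double the sup-norm, and $b+b'>1$ prevents the doubling from compounding) is the right intuition, and your ``both coefficients are $\tfrac12$'' case is fine. But the central claim of your second step --- that whenever a full-weight coefficient appears in the $\Phi(\cdot,b')$ stage, the corresponding source value of $q'$ is at most $M$ --- is false when $b<b'$. Concretely, take $q$ uniform (so $M=\tfrac12$), $b=0.4$, $b'=0.7$, $y=-0.3$. Then $q'=\Phi(q,0.4)$ equals $\tfrac34$ on $[0.2,0.6]\cup[-0.6,-0.2]$; since $y\in[1-2b',1-b']=[-0.4,0.3]$, the $\Gamma_-$ coefficient at $y$ is full weight with source point $x^*=y+b'=0.4$, where $q'(0.4)=\tfrac34=\tfrac32M>M$. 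The inequality you need is $x^*=y+b'\ge 1-b$ (so that $\Gamma_-(q,b)$ has vanished at $x^*$), i.e.\ $y\ge 1-b-b'$; the available lower bound $y\ge 1-2b'$ implies this only when $1-2b'\ge 1-b-b'$, i.e.\ $b\ge b'$. Worse, in this example the $\Gamma_+$ summand of $q''(y)$ is \emph{also} full weight ($y\in[-1+b',-1+2b']=[-0.3,0.4]$), so your budget $1\cdot M+\tfrac12\cdot 2M$ does not even apply; the bound $q''(-0.3)=q'(0.4)+q'(-1)=\tfrac34+\tfrac14=2M$ is saved only because the second source point happens to land where $q'$ is small, which your pointwise accounting does not see.

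The paper closes exactly this gap (its case $b\le b'$) by a different mechanism: it bounds $q''(x)\le q'(x-b')+q'(x+b')$ outright, expands both terms into the four second-generation quantities $(\Gamma_\pm(q,b))(x\mp b')$ and $(\Gamma_\pm(q,b))(x\pm b')$, observes that the two ``inward'' terms are each exactly $\tfrac12 M$, and --- this is where $b+b'>1$ enters --- that the two ``cross'' terms $(\Gamma_+(q,b))(x-b')$ and $(\Gamma_-(q,b))(x+b')$ cannot both be nonzero, since the first vanishes for $x\le -1+b+b'$, the second for $x\ge 1-b-b'$, and these two half-lines cover $[-1,1]$ because $-1+b+b'>0>1-b-b'$. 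So the compensation is a \emph{joint} four-term accounting across both source points, not a pointwise bound on $q'$ at the full-weight source. Your reasoning in the regime $b'\le b$ (where the full-weight source genuinely falls outside the support of $\Gamma_-(q,b)$) matches the paper's other case and is correct, as is your reduction to general $q\le M$ and the final iteration over the $m$ consecutive pairs on each side of $q_{2m}$; the missing piece is a correct argument for $b<b'$.
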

\begin{proof}
We may assume that $q$ is the uniform distribution, so $q(x) = \frac12$ for all $-1 \le x \le 1$. We wish to show that $q''(x) \le 1$ for all $-1 \le x \le 1$. This is a tedious case check; details are given below.

Note that for each $x$, we have $(\Gamma_-(q,b))(x) \le \frac12$ and $(\Gamma_+(q,b))(x) \le \frac12$, so \[q'(x) = (\Phi(q,b))(x) = (\Gamma_-(q,b))(x) + (\Gamma_+(q,b))(x) \le 1\] for each $x$. Thus, for each $x$, if $(\Gamma_-(q',b'))(x) = 0$ or $(\Gamma_+(q',b'))(x) = 0$, we would automatically have $q''(x) \le 1$, as desired. It remains to check the case where $(\Gamma_-(q',b'))(x) > 0$ and $(\Gamma_+(q',b'))(x) > 0$; this implies that $-1+b' \le x \le 1-b'$.

First suppose that $b \le b'$. Then note that since we have $-1+b' \le x \le 1-b'$, we also have $(\Gamma_-(q,b))(x-b') = \frac12 q(x+b-b') = \frac14$ and $(\Gamma_+(q,b))(x+b') = \frac12 q(x-b+b') = \frac14$. Note that $b + b' > 1$, so at least one of $(\Gamma_+(q,b))(x-b')$ and $(\Gamma_-(q,b))(x+b')$ is equal to $0$. Furthermore, $(\Gamma_+(q,b))(x-b') \le \frac12$ and $(\Gamma_-(q,b))(x+b') \le \frac12$, so their sum \[(\Gamma_+(q,b))(x-b') + (\Gamma_-(q,b))(x+b') \le \frac12 + 0 = \frac12.\] We conclude that \begin{align*}
    q''(x) &= (\Phi(q',b'))(x) \\
    &= (\Gamma_-(q',b'))(x) + (\Gamma_+(q',b'))(x) \\
    &\le q'(x-b') + q'(x+b') \\
    &= (\Phi(q,b))(x-b') + (\Phi(q,b))(x+b') \\
    &= (\Gamma_-(q,b))(x-b') + (\Gamma_+(q,b))(x-b') + (\Gamma_-(q,b))(x+b') + (\Gamma_+(q,b))(x+b') \\
    &= (\Gamma_-(q,b))(x-b') + (\Gamma_+(q,b))(x+b') + ((\Gamma_+(q,b))(x-b') + (\Gamma_-(q,b))(x+b')) \\
    &\le \frac14 + \frac14 + \frac12 = 1
\end{align*} as desired.

Now suppose that $b' \le b$. Observe that if $1-2b' \le x \le 1-b'$, then we have $1-b' \le x+b' \le 1$, which implies $1-b \le x+b' \le 1$. Thus, $(\Gamma_-(q,b))(x+b') = 0$, which implies \begin{align*}(\Gamma_-(q',b'))(x) &\le q'(x+b') \\ &= (\Phi(q,b))(x+b') \\ &= (\Gamma_-(q,b))(x+b') + (\Gamma_+(q,b))(x+b') \\ &= (\Gamma_+(q,b))(x+b') \\ &\le \frac12.\end{align*} On the other hand, if $b' \le \frac23$ and $-1+b' \le x \le 1-2b'$, then we have $(\Gamma_-(q',b'))(x) = \frac12 q'(x+b')$, which implies \begin{align*}(\Gamma_-(q',b'))(x) &= \frac12 q'(x+b') \\
&= \frac12 (\Phi(q,b))(x+b') \\
&= \frac12 ((\Gamma_-(q,b))(x+b') + (\Gamma_+(q,b))(x+b')) \\
&\le \frac12 \left(\frac12 + \frac12\right)\\
&= \frac12.
\end{align*} In any case, we have $(\Gamma_-(q',b'))(x) \le \frac12$. By reflecting the argument about $0$, similar reasoning implies that $(\Gamma_+(q',b'))(x) \le \frac12$. We conclude that \[q''(x) = (\Phi(q',b'))(x) = (\Gamma_-(q',b'))(x) + (\Gamma_+(q',b'))(x) \le \frac12 + \frac12 = 1,\] as desired.
\end{proof}

\begin{figure}
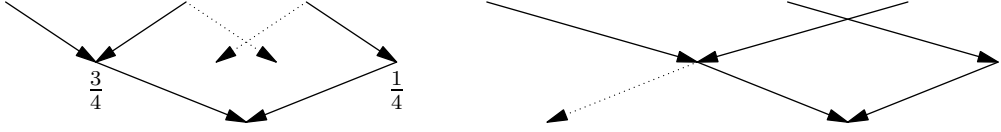

    \centering
\begin{asy}
unitsize(2cm);
pair rs = (4,0);
real s = 2.0;
draw(s*(-0.8,0.2)--s*(-0.5,0),arrow=Arrow());
draw(s*(-0.2,0.2)--s*(-0.5,0),arrow=Arrow());
draw(s*(-0.2,0.2)--s*(0.1,0),arrow=Arrow(),p = dotted);
draw(s*(0.2,0.2)--s*(-0.1,0),arrow=Arrow(), p= dotted);
draw(s*(0.2,0.2)--s*(0.5,0),arrow=Arrow());
draw(s*(-0.5,0)--s*(0,-0.2),arrow=Arrow());
draw(s*(0.5,0)--s*(0,-0.2),arrow=Arrow());

label("$\frac34$",s*(-0.5,0),S);
label("$\frac14$",s*(0.5,0),S);

draw(rs+s*(-1.2,0.2)--rs+s*(-0.5,0),arrow=Arrow());
draw(rs+s*(0.2,0.2)--rs+s*(-0.5,0),arrow=Arrow());
draw(rs+s*(-0.2,0.2)--rs+s*(0.5,0),arrow=Arrow());
draw(rs+s*(-0.5,0)--rs+s*(-1.0,-0.2),arrow=Arrow(),p=dotted);
draw(rs+s*(-0.5,0)--rs+s*(0,-0.2),arrow=Arrow());
draw(rs+s*(0.5,0)--rs+s*(0,-0.2),arrow=Arrow());
\end{asy}
    \caption{A visualization of the proof of Claim. The left half represents the case where $b \le b'$, and the right half represents the case where $b' \le b$.}
    \label{fig:claim}
\end{figure}

Due to Claim, we have that $r_{2m+1} \le 2\epsilon$, $r_{2m+2} \le 2\epsilon$, $r_{2m+3} \le 4\epsilon$, $r_{2m+4} \le 4\epsilon$, etc. Summing yields \begin{align*}\sum_{i=0}^{4m} r_i &\le (2^m + 2^m + \cdots + 4+4+2+2+1+2+2+4+4+\cdots+2^m+2^m)\epsilon \\ &= (1+4(2^1+2^2+\cdots+2^m))\epsilon \\ &= (2^{m+3}-7)\epsilon \\ &= 1,\end{align*} so $\sum_i r_i \le 1$, as desired. $\blacksquare$

\textit{Proof of Theorem \ref{construction}.} Assume that $n = 4m-1$ is $1$ less than a multiple of $4$, and define $\delta = \frac{1}{3\cdot 2^{m-1} - 1}$. We will explicitly construct $(a_1,a_2,\cdots,a_n)$ such that no matter how $(e_1,e_2,\cdots,e_n)$ are chosen, the resulting range is at least $2 - \delta$.

Our construction will be as follows. For each $1 \le i \le n$:\begin{itemize}
    \item If $i$ is odd, then $a_i = 1$.
    \item If $i = 2m$, then $a_i = 1 - 2^{m-1}\delta$.
    \item If $2 \le i \le 2m-2$ and $i$ is even, then $a_i = 1 - 2^{(i/2)-1}\delta$.
    \item If $2m+2 \le i \le 4m-2$ and $i$ is even, then $a_i = 1 - 2^{(2m-1)-(i/2)}\delta$.
\end{itemize}
For example, if $m = 4$ then our $(a_1,a_2,\cdots,a_n)$ will be \[\left(1,\frac{22}{23},1,\frac{21}{23},1,\frac{19}{23},1,\frac{15}{23},1,\frac{19}{23},1,\frac{21}{23},1,\frac{22}{23},1\right).\] 

Assume without loss of generality (by flipping the signs if needed) that $e_{2m} = -1$. If $e_{2m+1} = -1$ also, then we cannot avoid having a range at least $2-\delta$ even if we restrict our attention to only $(a_{2m},a_{2m+1},\cdots,a_n)$. Similarly, if $e_{2m-1} = -1$, then we cannot avoid having a range at least $2-\delta$ even if we restrict our attention to only $(a_1,a_2,\cdots,a_{2m})$. In fact, for any even $i \ge 2m$, if $e_i = e_{i+1}$ then the range is at least $2-\delta$, and for any even $i \le 2m$, if $e_i = e_{i-1}$ then the range is at least $2-\delta$. (This is true because we may start at $a_i$ and work towards the closest endpoint $a_1$ or $a_n$.) It remains to check the case where $-e_1 = e_2$, $-e_3 = e_4$, $\cdots$, $-e_{2m-3} = e_{2m-2}$, $-e_{2m-1} = e_{2m} = -e_{2m+1} = -1$, $e_{2m+2} = -e_{2m+3}$, $\cdots$, $e_{n-3} = -e_{n-2}$, $e_{n-1} = -e_n$.

If we have $(e_{2m-1},e_{2m},e_{2m+1},e_{2m+2}) = (+1,-1,+1,+1)$, then our range is already at least $2 + (2^{m-1}-2^{m-2})\delta$. Similarly, if we have \[(e_{2m-1},e_{2m},e_{2m+1},e_{2m+2},e_{2m+3},e_{2m+4}) = (+1,-1,+1,-1,+1,+1),\] then our range is already at least $2 + (2^{m-1}+2^{m-2}-2^{m-3})\delta$. In general, if the sequence $(e_1,e_2,$ $\cdots,$ $e_n)$ is not perfectly alternating then the range will be at least $2-\delta$. It remains to check the case where the sequence $(e_1,e_2,\cdots,e_n)$ is perfectly alternating; i.e. $e_i = (-1)^{i+1}$ for each $i$. We easily verify that the range in this case is \begin{align*}&1 + (1+2+4+\cdots + 2^{m-2} + 2^{m-1} + 2^{m-2} + \cdots + 4 + 2 + 1)\delta \\ &= 1 + (2^m + 2^{m-1} - 2)\delta \\ &= 2 - \delta, \end{align*} as desired. $\blacksquare$

%\section{Further questions}
%\begin{enumerate}
%    \item What is the precise value of the fit $p_n$?
%    \item Are there multiple choices of $(a_1,a_2,\cdots,a_n)$ which maximize $f(a_1,a_2,\cdots,a_n)$? In other words, does $f$ have more than one global maximum?
%    \item Give a better construction.
%    \item Give a fast algorithm computing the fit $p_n$.
%\end{enumerate}

\bibliographystyle{alpha}
\bibliography{stepcover}

%\url{https://www.msri.org/people/staff/levy/files/Book42/11cali.pdf}

%\url{https://epubs.siam.org/doi/epdf/10.1137/0214025}

%\url{https://epubs.siam.org/doi/epdf/10.1137/0213038}

%in \cite{calinescu05} and in \cite{hopcroft85}

\end{document}